\documentclass{article}
\usepackage[utf8]{inputenc}
\usepackage[english]{babel}
\usepackage{geometry}
\geometry{
	a4paper
}
\usepackage{fullpage} 
\usepackage[
]{hyperref}
\hypersetup{
	colorlinks,
	linkcolor={black},
	citecolor={blue!50!black},
	urlcolor={blue!80!black}
}
%
%
\usepackage{amssymb,bbm}
\usepackage{mathdots,mathtools}
\usepackage{xcolor,graphicx}
%
\usepackage[all]{xy}
\usepackage{stmaryrd}
\usepackage{wasysym}
\usepackage{mathbbol} 
\numberwithin{equation}{section}
\allowdisplaybreaks
%
\usepackage{subcaption}
%
\usepackage{easyReview}
\usepackage{amsthm}
\newtheorem{theorem}{Theorem}[section]

\newtheorem{proposition}[theorem]{Proposition}
\newtheorem{lemma}[theorem]{Lemma}
\newtheorem{corollary}[theorem]{Corollary}

\theoremstyle{remark}
\newtheorem{remark}{Remark}[section]
\theoremstyle{definition}

\usepackage{authblk}
%

%
%

\begin{document}

\title{The standard augmented multiplicative coalescent revisited}
\author[1]{Josué Corujo}
\author[2]{Vlada Limic}

\affil[1]{
Univ Paris Est Créteil, Univ Gustave Eiffel, CNRS, LAMA UMR 8050, F-94010 Créteil, France
}
\affil[1,2]{Institut de Recherche Mathématique Avancée, UMR 7501 Université de Strasbourg et CNRS, 
7 rue René-Descartes, 67000 Strasbourg, France}

{
	\makeatletter
	\renewcommand\AB@affilsepx{: \protect\Affilfont}
	\makeatother
	
	\affil[ ]{Email ids}
	
	\makeatletter
	\renewcommand\AB@affilsepx{, \protect\Affilfont}
	\makeatother
	
	\affil[1]{\href{mailto:josue.corujo-rodriguez@u-pec.fr}{josue.corujo-rodriguez@u-pec.fr}}
	\affil[2]{\href{mailto:vlada@math.unistra.fr}{vlada@math.unistra.fr}}
}

\maketitle
\begin{abstract}
The Erdős-Rényi random graph is the fundamental random graph model.
In this paper we consider its continuous-time version,
where multi-edges and self-loops are also allowed.
It is well-known that the sizes of its connected components evolve according to the multiplicative coalescent dynamics.
Moreover, with the additional information on the number of surplus edges, the resulting process follows the \emph{augmented multiplicative coalescent dynamic}, constructed by Bhamidi, Budhiraja and Wang in 2014.
The same authors exhibit the scaling limit, which can
be specified in terms of the infinite vector of excursions (in particular their lengths, and the areas enclosed by the excursion curves) above past infima of a reflected Brownian motion with linear infinitesimal drift.
We use some recent results, using a graph exploration process called the simultaneous breadth-first walk, to study the 
same scaling limit, called the \emph{standard augmented multiplicative coalescent} (SAMC). 
We present a self-contained, simpler and more direct approach than that of any previous construction of the SAMC.
Furthermore, we believe that the method described here is convenient for generalizations, one of which would be the study of general non-standard eternal augmented multiplicative coalescents.
\end{abstract}

\smallskip
\emph{MSC2020  classifications.}
05C80, 
60J90, 
60C05 

\emph{Key words and phrases.}
critical random graphs, multiplicative coalescent, augmented multiplicative coalescent, 
breadth-first walk,
surplus edges, 
multi-graph, 
stochastic coalescent.


\section{Introduction}

For $n \in \mathbb{N}$ we write $[n]$ to mean $\{1,\ldots, n\}$.
Let us denote by $G(n,p)$, where $p\in[0,1]$, the Erd\H{o}s-Rényi random graph: each edge is included in the graph with probability $p$, independently over edges.
A continuous-time variation of $G(n,p)$, where $p\in[0,1]$, is naturally constructed as follows: 
fix the set of vertices $[n]$ and let each of the $\binom{n }{2}$ edges appear at an exponential time of rate $1$, independently of each other.
This transforms the model into a homogeneous continuous-time Markov chain, running on the set of graphs with vertices $[n]$ and going from the trivial graph ($n$ disconnected vertices) at time $t = 0$, to the complete graph when $t \to \infty$.
Let us denote this process by $(G^{(n)}(t), t \ge 0)$.
This continuous-time construction is equally obtained by the time-change $t = -\ln(1-p)$ in the natural coupling of $\big(G(n,p), p \in [0,1]\big)$, i.e.\ the process where at each $p \in [0,1]$ the edge $\{i,j\}$ is present in the graph if and only if $\{ U_{\{i,j\}} \le p \}$, for a fixed family $ \displaystyle ( U_{\{i,j\}} )_{i\neq j \in [n]}$ of independent random variables with uniform distribution in $[0,1]$.

Here and in the rest of the paper \emph{connected} means connected by a path of edges in the usual graph theoretical sense. 
Let the \emph{mass} of any connected component be equal to the number of its vertices.
Due to elementary properties of independent exponential random variables, it is immediate that a pair of connected components merges at the rate equal to the product of their masses.
In other words, the vector of sizes of the connected components of the  continuous-time random graph evolves according to the \emph{multiplicative coalescent} (MC) dynamics:
\begin{equation}
\label{merge}
\begin{array}{c}
\mbox{ any pair of components with masses (sizes) $x$ and $y$ merges }\\
\mbox{ at rate $x \cdot y$ into a single component of mass $x+y$.}
\end{array}
\end{equation}
Due to the relation $p= 1-\mathrm{e}^{-t}$, this continuous-time random graph exhibits the same phase transition as $G(n,\cdot)$ does when $n$ diverges, at the critical time $1/n$ (plus a lower order term) where the giant component is born \cite{aldRGMC,bol-book,erdren}.
Let $\pmb{C}^{(n)}(t) = \big( |C_1^{(n)}(t)|, |C_2^{(n)}(t)|, \dots \big)$ be the vector such that $|C_i^{(n)}(t)|$ is the size of $C_i^{(n)}(t)$ which is the $i^{\mathrm{th}}$ largest component of $G^{(n)}(t)$.
In fact, taking $t_n := 1/n + t/n^{4/3}$ Aldous \cite{aldRGMC} characterized the
scaling limit of $n^{-2/3} \pmb{C}^{(n)}(t_n)$ when $n \to \infty$.
Indeed, let us define the metric space $(l^2_\searrow,d)$ of infinite sequences
$\pmb{y} = (y_1,y_2,\ldots)$
such that $y_1 \geq y_2 \geq \ldots \geq 0$
and $\sum_i y_i^2 < \infty$,
with the $l^2$-distance 
$\operatorname{d}(\pmb{y},\pmb{z} ) = \sqrt{\sum_i (y_i-z_i)^2}$.
For each $t\in \mathbb{R}$, let
\begin{equation}\label{eq:def_W}
W^{t}(s) = W(s) -  \frac{1}{2} s^2  +  t \, s,  \text{ for every } s \geq 0,
\end{equation}
where
$W$ is standard Brownian motion. 
Note that $W^t$ solves the stochastic differential equation
\(
    \mathrm{d} W^t_s = \mathrm{d} B_s + \mu_t(s) \mathrm{d}s,
\)
with a linear (infinitesimal) drift term $\mu_t(s) = t - s$.
Let
\begin{equation}\label{eq:def_B}
 B^{t}(s) := W^{t}(s) - \min_{0 \leq u \leq s} W^{t}(u), \text{ for every } s \geq 0.
\end{equation}
Aldous \cite{aldRGMC} proved that for each $t$, the infinite vector of ordered excursion lengths of $B^{t}$ away from $0$, denoted $\pmb{\mathcal{X}}(t)$, belongs to $l^2_\searrow$, and that
\begin{equation}\label{eq:convergenceMC}
n^{-2/3} \pmb{C}^{(n)}\left( \frac{1}{n} + \frac{t}{n^{4/3}} \right)
\text{ 
converges in distribution to } 
\pmb{\mathcal{X}}(t) \text{ in } (l^2_\searrow, \operatorname{d}).
\end{equation}
Furthermore, Aldous \cite{aldRGMC} also proved existence of an entrance law $(\pmb{\mathcal{X}}^\star(t), t \in \mathbb{R})$
such that for each $z>-\infty$, 
$(\pmb{\mathcal{X}}^\star(t), t \geq z)$
is a Feller process in $l^2_\searrow$ evolving according to \eqref{merge}, and such that the marginal $\pmb{\mathcal{X}}^\star(t)$ 
and $\pmb{\mathcal{X}}(t)$ have the same distribution for each $t\in \mathbb{R}$.
The process $(\pmb{\mathcal{X}}^\star(t), t \in \mathbb{R})$ is called the \emph{standard (eternal) multiplicative coalescent}.

\begin{remark}[Other related results]
Although it is not necessary to understand the rest of the article, we will now briefly discuss some other results related to multiplicative coalescents that could be convenient for the reader.
Aldous and Limic \cite{EBMC} extended \eqref{eq:convergenceMC} results to a general class of inhomogeneous random graphs and characterized all the marginal laws of the extreme eternal multiplicative coalescent.
More recently, Limic \cite{multcoalnew} characterized the trajectories of all the extreme version of the multiplicative coalescent.
In fact, defining $\pmb{\mathcal{X}}(t)$ as before, for every $t \in \mathbb{R}$, using the same Brownian motion we obtain a stochastic process $(\pmb{\mathcal{X}}(t), t \in \mathbb{R})$.
The results in \cite[Thm.\ 1.2]{multcoalnew} imply that this process evolves according to the multiplicative coalescent dynamic.
The statements of this kind, which characterize the trajectories of a process (in this case the multiplicative coalescent) could be called \emph{dynamic}.
In contrast to results like \eqref{eq:convergenceMC}, which characterize the marginal laws, and hence could be called \emph{static}.

For other research related to the Erd\H{o}s\,--\,Rényi random graph and other random graph models, with links to the eternal multiplicative coalescents we wish to refer the reader to \cite{Addario2012,bhamidietal2,Broutin2021,Broutin2022,bromar15,Dhara2017,Dhara2019} and the references therein.
\end{remark}

\subsection{The standard augmented multiplicative coalescent}

Let us now explain how to enrich the multiplicative coalescent dynamics, so that one can keep track of the non-spanning or the \emph{surplus} (also known as the \emph{excess}) edges in addition.
Recall that for a finite connected graph $(V, E)$ with $|V| = n$, the number of surplus edges is defined as $|E| - (n-1)$, since any spanning tree of $(V,E)$ must have exactly $n-1$ edges.
The number of surplus edges is a measure of how much a connected graph differs from a tree.

Let
\(
	\mathbb{N}^\infty = \{ (n_1, n_2, \dots): n_i \in \mathbb{N}_0, \text{ for all } i \ge 1 \},
\)
and define
\[
\mathbb{U}_\searrow = \left\{ (x_i, n_i)_{i \ge 1} \in l^2_\searrow \times \mathbb{N}^\infty : \sum_{i = 1}^\infty x_i \, n_i < \infty \text{ and } n_i = 0 \text{ whenever } x_i = 0, \, i \ge 1 \right\}\!.
\]
For every element $(x_i, n_i)_{i \ge 1} \in \mathbb{U}_\searrow$, the positive real $x_i$ represents the mass of the $i^{\mathrm{th}}$ largest component, while the positive integer $n_i$ is the number of surplus edges in the same component.
Let us define the following metric on $\mathbb{U}_\searrow$:
\begin{equation*}
 \mathbf{d}_{\mathbb{U}} \big( (\pmb{x}, \pmb{n}), (\pmb{x}', \pmb{n}') \big) = \left( \sum_{i = 1}^\infty (x_i - x_i')^2 \right)^{1/2} + \sum_{i = 1}^\infty |x_i \cdot n_i - x_i' \cdot n_i'|.
\end{equation*}
See Remark 4.15 in \cite{bhamidietal2} for a discussion on the choice of $\mathbf{d}_{\mathbb{U}}$ as the metric on $\mathbb{U}_\searrow$.
Consider also
\begin{equation} \label{def:Uzero}
\mathbb{U}_\searrow^0 = \left\{ (x_i, n_i)_{i \ge 1} \in \mathbb{U}_\searrow : \text{ if } x_k = x_m, \, \text{ for } k \le m, \text{ then } n_k \ge n_m \right\},
\end{equation}
which is convenient for working with discrete random graphs where different components may (and typically do) have equal total masses.

Counting the surplus edges in $({G}^{(n)}(t), t\ge 0)$ is not so convenient, due to the fact that one needs to keep track of the edges that already exist. 
This technical issue can be avoided by considering instead the following multi-graph version of the Erd\H{o}s\,--\,Rényi random graph, originally introduced by Bhamidi et al.\ \cite{bhamidietal2}.
For each $i,j\in [n]$ a new \emph{directed edge}
 $i \rightarrow j$ appears at rate $1/2$, and for each $i$ a self-loop $i \rightarrow i$ appears at rate $1/2$. 
This random-graph is in fact an oriented multi-graph, since it is an oriented graph with loops and multiple edges allowed.
Let us denote by $(MG^{(n)}(t), t\ge 0)$, this continuous-time multi-graph-valued Markov chain.
If the connected components are obtained by taking into account all the edges (regardless of their orientation), and the mass of each connected component is again the sum of masses of its participating vertices,
it is easy to see that the resulting ordered component masses evolve again according to the multiplicative coalescent transitions \eqref{merge}. 
Indeed, the presence of multi-edges and loops does not change the connectivity properties or the total component masses, so the random graph process from Bhamidi et al.~\cite{bhamidietal2} can be coupled to the Erd\H{o}s\,--\,Rényi random graph, and both are graph representations of the multiplicative coalescent.
A natural coupling could be realized as follows. The multi-graph $({MG}^{(n)}(t), t\ge 0)$ can be defined by associating a Poisson process of rate $1/2$ to each pair of vertices $i,j \in [n]$ accounting for the number of edges $i \to j$.
Thus, an edge (in either direction) linking $i$ and $j \neq i$ appears at rate $1$, while the self-loop $i \to i$ appears at rate $1/2$ for each $i \in [n]$.
Then, $({G}^{(n)}(t), t\ge 0)$ is simply obtained by only keeping track of the existence of at least one edge between each pair of vertices, and by forgetting the self-loops. 
The advantage of working with ${MG}^{(n)}$ rather than $G^{(n)}$ is that in ${MG}^{(n)}$ it is much easier to account for the number of surplus edges.

The joint evolution of component masses and surplus edge counts in ${MG}^{(n)}$ is a continuous-time Markov process in $(\mathbb{U}_\searrow^0, \mathbf{d}_{\mathbb{U}})$ (see \eqref{def:Uzero}) evolving as follows:
\begin{description}
 \item [coalescence jump] for each $i \neq j$,
 at rate $x_i \cdot x_j$,
the process jumps from $(\pmb{x}, \pmb{n})$ to 
 $(\pmb{x}^{i, j}, \pmb{n}^{i,j})$ 
 where $(\pmb{x}^{i, j}, \pmb{n}^{i,j})$  is obtained by merging components $i$ and $j$ into a component with mass $x_i + x_j$ and surplus $n_i + n_j$, followed by reordering the coordinates with respect to the masses in order to obtain again an element of $\mathbb{U}_\searrow^0$,
 \item [surplus jump] for each $i \ge 1$, at rate $x_i^2/2$,
 the process jumps from $(\pmb{x}, \pmb{n})$ to 
 $(\pmb{x}, \pmb{n}^{i})$, where $(\pmb{x}, \pmb{n}^{i})$ is the state obtained by changing only the $i^{\mathrm{th}}$ component $(x_i,n_i)$ of $(\pmb{x}, \pmb{n})$ into $(x_i,n_i+1)$,
and reordering the coordinates if needed, to obtain an element in $\mathbb{U}_\searrow^0$.
\end{description}
We will call a process with this dynamic an \emph{augmented multiplicative coalescent}, even in the setting where $\pmb{x}$ has a finite number of non-zero components.

Let us denote by $\Psi^{(n)}(t)$ the natural embedding in $\mathbb{U}^0_\searrow$ of the vector keeping track of the normalized sizes of the connected components in $MG^{(n)}(t)$, together with the number of (unlimited) surplus edges. 
More precisely, the scaling of connected component sizes is as in \eqref{eq:convergenceMC}, while the surplus edge counts are not rescaled,
so that 
$$
\Psi^{(n)}(t) = \left(\left(n^{-2/3}|C_i^{(n)}(t)|, \operatorname{SP}\big( C_i^{(n)}(t) \big) \right), \, i\geq 1\right),$$
where here and below $|C|$ and $\operatorname{SP}(C)$ denote the total mass and the number of surplus edges in the connected component $C$, respectively.
The main results of this paper concern the scaling limit of $\Psi^{(n)}(t_n)$, for $t_n = 1/n + t/n^{4/3}$.
Before stating them, let us introduce the limiting process.

Recall $W^t, B^t$ from \eqref{eq:def_W}--\eqref{eq:def_B}.
Let $\Lambda$ be a homogeneous Poisson point process on $[0,\infty)\times [0,\infty)$, independent of $W^t$ (and therefore of $B^t$).
Following  \cite{aldRGMC,bhamidietal2}, let $\Lambda^{t}(s)$ be the number of points in $\Lambda$
below the curve $u \mapsto B^{t}(u)$, $u\in [0,s]$. 
To each excursion of $B^{t}$ above $0$, one can assign a random ``mark count'' to be the increase in $\Lambda^{t}$  attained during this excursion (see \cite[Section 2.3.2]{bhamidietal2} for details). 
Let $\mathcal{N}_i(t)$ be this count corresponding to the $i^{\mathrm{th}}$ longest excursion of $B^{t}$, and $\pmb{\mathcal{N}}(t)  =(\mathcal{N}_1(t),\mathcal{N}_2(t),\ldots)$, so that for all $t \ge 0$
\begin{equation}
    \label{def:curlyZ}
    \pmb{\mathcal{Z}}(t) := (\pmb{\mathcal{X}}(t), \pmb{\mathcal{N}}(t)) \in \mathbb{U}_\searrow^0.
\end{equation}
We now state our main result.
\begin{theorem}\label{thm:main_result}
For every $t \in \mathbb{R}$,
\(  
\Psi^{(n)}\left(\frac{1}{n} + \frac{t}{n^{4/3}}\right)
\)
converges in distribution to $\pmb{\mathcal{Z}}(t)$ in $(\mathbb{U}^0_\searrow, \operatorname{d}_{\mathbb{U}})$ as $n\to \infty$.
\end{theorem}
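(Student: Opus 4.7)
The plan is to build a single exploration process that simultaneously encodes the component masses and surplus edge counts of $MG^{(n)}(t_n)$, and to pass to a joint scaling limit. Concretely, I would define a simultaneous breadth-first walk $\hat{Z}^{(n)}$ of $MG^{(n)}(t_n)$ as follows: order the vertices in a size-biased manner, perform a BFS exploration in this order, and let $\hat{Z}^{(n)}$ be the walk whose increments record the number of freshly discovered neighbours at each vertex (net of the $-1$ coming from processing that vertex). Each time an edge revealed from a newly explored vertex lands on an already-listed vertex (or closes a self-loop), I place a \emph{mark} at the current height of the walk. Then the excursions of $\hat{Z}^{(n)}$ above its running infimum enumerate the connected components in exploration order; the length of the $i$th excursion is $|C_{\sigma(i)}^{(n)}(t_n)|$ for an appropriate (size-biased) permutation $\sigma$, and the number of marks it contains equals $\operatorname{SP}(C_{\sigma(i)}^{(n)}(t_n))$.

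The first main step is to prove the functional convergence $n^{-1/3}\hat{Z}^{(n)}(\lfloor n^{2/3} s\rfloor) \to W^{t}$ in the Skorokhod topology on $D([0,\infty),\mathbb{R})$; this is the classical Aldous argument at criticality, the parabolic drift arising from the expansion of the number of ``already seen'' vertices at time $t_n = 1/n + t/n^{4/3}$. Continuous mapping then yields convergence of the reflected walk to $B^{t}$ and of the ordered excursion lengths to $\pmb{\mathcal{X}}(t)$ in $(l^2_\searrow,\operatorname{d})$, recovering \eqref{eq:convergenceMC}. The second main step is the joint convergence of the pair (walk, marks) to $(W^{t},\Lambda)$ with $\Lambda$ an \emph{independent} homogeneous Poisson point process on $[0,\infty)^2$. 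The independence and intensity follow because, conditionally on the explored part of the graph, each step of the BFS produces a surplus mark with probability roughly proportional to the number of already-listed but not-yet-processed vertices, which is exactly the height of the reflected walk; the $n^{-1/3}$--$n^{2/3}$ scaling converts this into unit Lebesgue intensity on $\{(s,h): 0\le h\le B^{t}(s)\}$, and the asymptotic independence from $W^{t}$ is obtained by a thinning argument.

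The final step is a continuous mapping sending a càdlàg path together with a marked point process to the ranked sequence of (excursion length, number of marks inside the excursion). Continuity (almost surely under the limit law) rests on the fact that $B^{t}$ has only countably many excursions with summable squared lengths and almost surely no mark on the boundary of any excursion, so the ordering and the mark-count map are a.s.\ continuous at $(W^{t},\Lambda)$. The real obstacle is not the $l^2_\searrow$ component of $\mathbf{d}_{\mathbb{U}}$ but the weighted sum $\sum_i x_i\, n_i$: one must preclude a nontrivial contribution coming from the long tail of small components, each of which may still carry a few surplus edges. I would handle this with a uniform tail bound, estimating $\sum_{i>K} n^{-2/3}|C_i^{(n)}(t_n)|\cdot \operatorname{SP}(C_i^{(n)}(t_n))$ via the fact that surplus edges appear at rate $x_i^2/2$ within each component together with a.s.\ crude control on the total mass of small components, and an analogous computation for $\sum_{i>K}\mathcal{X}_i(t)\mathcal{N}_i(t)$ at the limit. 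With such a tail bound uniform in $n$, truncation to the first $K$ coordinates reduces the claim to the joint Skorokhod convergence already obtained, completing the proof.
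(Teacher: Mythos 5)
Your overall architecture (an exploration walk with surplus marks, joint convergence of the pair to $(W^t,\Lambda)$, a continuous-mapping step for the ranked excursion lengths and mark counts, and then a tail bound for the $\sum_i x_i n_i$ part of $\mathbf{d}_{\mathbb{U}}$) is the same strategy as the paper's, up to the choice of encoding: the paper uses Limic's simultaneous breadth-first walk, for which the exact finite-$n$ identities (excursion lengths $\overset{d}{=}$ component masses; surplus counts conditionally Poisson with mean equal to the \emph{area} under the corresponding excursion, independently over excursions) are available from \cite{multcoalnew,Corujo_Limic_2023a}, whereas you rederive asymptotic versions from the discrete BFS. That difference is a matter of route, not of correctness. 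You also correctly identify the crux: everything reduces to showing that the small components contribute negligibly to $\sum_i X_i^{(n)} N_i^{(n)}$, i.e.\ to \eqref{eq:tight_to_prove}.

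The gap is that your proposed proof of this tail bound does not close. ``Crude control on the total mass of small components'' gives nothing here, because essentially all of the mass sits in small components: $\sum_{i:\,X_i^{(n)}<\delta} X_i^{(n)} = \sigma_1^{(n)} - O(1) = n^{1/3}-O(1)$. And the bound suggested by ``surplus edges appear at rate $x_i^2/2$'' (equivalently, the pairwise bound \eqref{eq:surplus_edge_rate_bound}, which gives $\mathbb{E}[N_i^{(n)}\mid C_i^{(n)}]\lesssim q_n(t)\,(X_i^{(n)})^2 \sim n^{1/3}(X_i^{(n)})^2$) yields $\mathbb{E}\big[\sum_i X_i^{(n)}N_i^{(n)}\mathbb{1}_{\{X_i^{(n)}<\delta\}}\big]\lesssim n^{1/3}\,\delta\,\mathbb{E}[\sigma_2(\pmb{X}^{(n)})]$, which diverges with $n$ for fixed $\delta$: it ignores that most pairs in a component become connected only very late, so almost none of the ``rate $x_i^2/2$'' is actually accrued. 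The estimate genuinely needs the finer fact that the conditional mean of $N_i^{(n)}$ is the area under the $i$th excursion of the reflected walk (of order $x_i^{3/2}$ for small $x_i$, not $n^{1/3}x_i^2$), together with a quantitative bound on the expected total area carried by excursions shorter than $\delta$. The paper obtains this in two moves you would need to supply: a size-biasing identity (Lemma \ref{lemma:random_vertex}) converting the sum over components into $n^{1/3}\,\mathbb{E}[\operatorname{SP}(\mathcal{C}(V_n))\mathbb{1}_{\{|\mathcal{C}(V_n)|<\delta\}}]$, i.e.\ into the area of the \emph{first} excursion of the walk; and an optional-stopping/martingale bound for a compensated Poisson process (Proposition \ref{proposition:upper_bound_standard_new_version}) showing this expected area, restricted to excursions of length at most $\delta$, is $O(\delta/n^{1/3})$. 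Without an argument of this precision, the truncation step of your proof fails.
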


An analogous statement is true in the limited surplus edge setting, or equivalently for the Erd\H{o}s\,--\,Rényi random (regular, not multi-) graphs.
Denote by $\hat{\varPsi}^{(n)}(q)$ the analogue of ${\Psi}^{(n)}(q)$, where $\hat{\varPsi}^{(n)}(q)$ keeps track of the number of surplus edges in each component of ${G}^{(n)}(q)$.

\begin{corollary}\label{corol:erdos-renyi_AMC}
For every $t \in \mathbb{R}$,
\(
\hat{\varPsi}^{(n)}\left(\frac{1}{n} + \frac{t}{n^{4/3}}\right)
\)
converges in distribution to $\pmb{\mathcal{Z}}(t)$ in $(\mathbb{U}^0_\searrow, \operatorname{d}_{\mathbb{U}})$ as $n\to \infty$.
\end{corollary}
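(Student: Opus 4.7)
The plan is to couple $G^{(n)}$ with $MG^{(n)}$ so that their component structures coincide, and then show that the $\mathbf{d}_{\mathbb{U}}$-distance between $\hat{\varPsi}^{(n)}(t_n)$ and $\Psi^{(n)}(t_n)$ vanishes in probability as $n\to\infty$. Together with Theorem~\ref{thm:main_result} and Slutsky's theorem, this will yield the claim.

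I would use the natural coupling described after \eqref{merge}: build $MG^{(n)}$ from independent rate-$1/2$ Poisson processes $(N_{ij})_{i,j\in[n]}$ counting directed edges, and declare the undirected edge $\{i,j\}$ present in $G^{(n)}(t)$ exactly when $i\neq j$ and $N_{ij}(t)+N_{ji}(t)\geq 1$. Under this coupling, the connected components of $G^{(n)}(t)$ and $MG^{(n)}(t)$ coincide as vertex sets, so their component-mass sequences agree and the $l^2$-part of $\mathbf{d}_{\mathbb{U}}$ is identically zero. Setting
$\Delta(C) := \operatorname{SP}_{MG}(C) - \operatorname{SP}_G(C) = L(C) + M(C) \ge 0$,
with $L(C) = \sum_{v\in C} N_{vv}(t)$ the self-loop count and $M(C) = \sum_{\{i,j\}\subseteq C,\,i\neq j} (N_{ij}(t)+N_{ji}(t)-1)^+$ the multi-edge count, the componentwise inequality $\operatorname{SP}_{MG}\ge\operatorname{SP}_G$ extends to the sorted surplus sequences within each tie-group of masses. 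The elementary identity $\sum_\ell |a_{(\ell)} - b_{(\ell)}| = \sum_j (a_j - b_j)$, valid whenever $a_j \ge b_j$ for every $j$, then gives
\[
\mathbf{d}_{\mathbb{U}}\bigl(\Psi^{(n)}(t_n),\,\hat{\varPsi}^{(n)}(t_n)\bigr) \;=\; n^{-2/3}\sum_i |C_i^{(n)}(t_n)|\cdot \Delta\bigl(C_i^{(n)}(t_n)\bigr).
\]

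To control the expectation of the right-hand side I would exploit the Poisson independence structure. The self-loop counts $(N_{vv})_v$ are independent of all edges that determine connectivity, so
$\mathbb{E}\bigl[\sum_i |C_i^{(n)}(t_n)|\, L(C_i^{(n)}(t_n))\bigr] = (t_n/2)\,\mathbb{E}\bigl[\sum_i |C_i^{(n)}(t_n)|^2\bigr]$.
For multi-edges, for each pair $\{i,j\}$ the excess $(N_{ij}+N_{ji}-1)^+$ is independent of the graph obtained by deleting all $\{i,j\}$-edges, and the full-graph component containing $\{i,j\}$ is contained in the union of the two corresponding modified-graph components; combined with $\mathbb{E}[(N_{ij}+N_{ji}-1)^+] = O(t_n^2)$, summing over the $\binom{n}{2}$ pairs gives an unscaled contribution of order $O(n^2 \cdot t_n^2 \cdot \mathbb{E}[|C(1,t_n)|])$. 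Plugging in $t_n = O(1/n)$ and the standard critical-window susceptibility estimate $\mathbb{E}[\sum_i |C_i^{(n)}(t_n)|^2] = n\,\mathbb{E}[|C(1,t_n)|] = O(n^{4/3})$ (cf.\ \cite{aldRGMC}), the total expected $\mathbf{d}_{\mathbb{U}}$-gap is $O(n^{-1/3}) \to 0$, and Markov's inequality concludes.

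The only technical input beyond Theorem~\ref{thm:main_result} is the uniform $L^1$-bound on $n^{-4/3}\sum_i |C_i^{(n)}(t_n)|^2$, which is standard in the critical-window literature (it can be upgraded from the $l^2_\searrow$-convergence \eqref{eq:convergenceMC} by routine moment arguments, or read off from classical susceptibility estimates). Everything else reduces to elementary Poisson computations and the order-statistic identity recalled above, so I do not anticipate serious obstacles; the main conceptual step is simply recognizing that under the natural coupling the entire discrepancy between the two processes is carried by the surplus coordinate, and that the surplus discrepancy is driven by a negligible number of self-loops and multi-edges at the critical time.
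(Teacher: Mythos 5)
Your proposal is correct, and it follows the paper's overall strategy (same multi-graph/simple-graph coupling, same observation that only the surplus coordinate of $\operatorname{d}_{\mathbb{U}}$ carries a discrepancy, same decomposition of that discrepancy into self-loops plus multi-edge excesses), but the quantitative step is genuinely different. The paper splits \eqref{eq:convergence_ER} into a tail over small components, which it controls by recycling the estimate \eqref{eq:tight_to_prove} from the proof of Theorem \ref{thm:main_result}, and finitely many head terms, each bounded via the conditional Poisson rates $\Delta_{j,k}$ of surplus edges given the simultaneous breadth-first walk (imported from the companion paper) together with $\mathbb{E}[X_i^{(n)}],\,\mathbb{E}[(X_i^{(n)})^2]=\Theta(1)$. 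You instead prove a single global $L^1$ bound, $\mathbb{E}\,\mathbf{d}_{\mathbb{U}}\bigl(\Psi^{(n)},\hat{\varPsi}^{(n)}\bigr)=O(n^{-1/3})$, working entirely at the level of the discrete Poisson edge processes: self-loops decouple exactly from connectivity, and for multi-edge excesses your resampling trick (independence of $(N_{ij}+N_{ji}-1)^+$ from the graph with the $\{i,j\}$-edges deleted, plus $|C(\{i,j\})|\le|C'(i)|+|C'(j)|$) is clean and correct, as is the order-statistics identity handling ties in $\mathbb{U}^0_\searrow$. What your route buys is a self-contained, quantitative argument that never invokes the SBFW surplus representation; what it costs is one external input, the critical-window susceptibility bound $\mathbb{E}\bigl[\sum_i|C_i^{(n)}(t_n)|^2\bigr]=O(n^{4/3})$, which is indeed classical but is not proved in this paper and does not follow from \eqref{eq:convergenceMC} alone without a uniform-integrability upgrade; you should either cite a precise reference for it or supply the standard exploration-process proof.
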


To the best of our knowledge, statements equal or similar to Theorem \ref{thm:main_result} were previously shown at least three times.
Bhamidi et al.~originally constructed the augmented multiplicative coalescent
using a lengthy and complicated argument for Theorem \ref{thm:main_result} (see \cite[Thm.\ 5.1]{bhamidietal2}).
Broutin and Marckert used another exploration process, based on a Prim's order among the vertices, and relied on some results from \cite{Addario2012} which were not outlined in detail, proved a stronger (and dynamic) result (see \cite[Thm.\ 5]{bromar15}): 
\begin{equation*}
(\pmb{\mathcal{Z}}(t), t \in \mathbb{R}) \text{ is a version of the augmented multiplicative coalescent.}
\end{equation*}
Dhara et al.\ \cite[Thm.\ 3.3]{Dhara2017} proved a result analogous to Theorem \ref{thm:main_result} for the configuration model with finite third moment degree. Their argument is based on rather intricate (configuration) model-based analysis.
In their Remark 8.5 (rooted to their Proposition 8.4), Dhara et al.\ claim that
the scaling limits of different functionals, including the re-scaled component-sizes and the number of
surplus edges for the configuration model and the Erd\H{o}s\,--\,Rényi random graph are the same.
Corollary \ref{corol:erdos-renyi_AMC} confirms an analogous link between the multi-graphs of Bhamidi et al.~and the Erd\H{o}s\,--\,Rényi random graphs.

The interest of our novel approach is twofold: on the one hand, our arguments are self-contained and rather simple. 
On the other hand, our methods are based on the simultaneous breadth-first walk introduced by Limic \cite{multcoalnew} and recalled in Section \ref{sec:sBFW} below (for the particular setting of the Erd\H{o}s\,--\,Rényi random graph).
This L\'evy-type process remarkably encodes the trajectories of inhomogeneous random graphs
in a dynamic way, allowing us also to keep track of the number of surplus edges.
This relation is studied in the companion paper \cite{Corujo_Limic_2023a}.

The methods used here could potentially be generalized in the
general (augmented) multiplicative coalescent setting.
A work in progress started by the authors will cover this direction.

\section{Proof of Theorem \ref{thm:main_result} }

To study the behavior of the Erd\H{o}s\,--\,Rényi random graph in the critical regime, meaning for times of order $t_n = 1/n + t/n^{4/3}$, one usually considers a modification of the random graph process where each of the $n$ vertices has mass all equal to $1/n^{2/3}$, and every edge appears at rate equal to the product $1/n^{4/3}$ of their masses. 
In the modified model, the emergence of the giant (order $1$) component thus happens on the time scale $q_n(t) = n^{4/3} \cdot t_n = n^{1/3} + t$, where $t\in \mathbb{R}$.

Let us denote by $$\Big( \mathcal{G}^{(n)}(q), q \ge 0\Big) \text{ and } \Big( \mathcal{MG}^{(n)}(q), q \ge 0\Big)$$ the versions of $\Big( {G}^{(n)}(q), q \ge 0\Big) \text{ and } \Big( {MG}^{(n)}(q), q \ge 0\Big)$, respectively, with the masses of their vertices scaled by $n^{-2/3}$, as described above.
Then, if $\pmb{\mathcal{C}}^{(n)}(q) \in l^2_\searrow$ denotes the ordered vector of sizes of the connected components in $\mathcal{G}^{(n)}(q)$ (also in $\mathcal{MG}^{(n)}(q)$), then $(\pmb{\mathcal{C}}^{(n)}(q), q\ge 0)$ is a multiplicative coalescent started from a vector 
\begin{equation}
\label{def:x_n}
\pmb{x}^{(n)} = \left( \frac{1}{n^{2/3}}, \frac{1}{n^{2/3}}, \dots, \frac{1}{n^{2/3}}, 0, 0, \ldots \right) \in l^2_{\searrow},
\end{equation} 
of exactly $n$ non-zero initial components (the infinitely many components of mass $0$ do not interact during the evolution and are typically ignored).
Similarly, if $\pmb{\mathcal{Z}}^{(n)}(q)$ denotes the vector recording the size and  the number of surplus edge in each connected component in $\mathcal{MG}^{(n)}(q)$, then 
$(\pmb{\mathcal{Z}}^{(n)}(q), q\ge 0)$ is an augmented multiplicative coalescent started from 
$(\pmb{x}^{(n)}, \pmb{0}) \in \mathbb{U}^0_\searrow$, where $\pmb{0}$ stands for the null vector in $\mathbb{N}^\infty$.
The statement of Theorem \ref{thm:main_result} is equivalent to the following one: for each $t \in \mathbb{R}$,
\begin{equation*}
\pmb{\mathcal{Z}}^{(n)} \left( n^{1/3} + t \right)
\text{ converges in distribution to } \pmb{\mathcal{Z}}(t) \text{ in } (\mathbb{U}^0_\searrow, \operatorname{d}_{\mathbb{U}}), \text{ as } n\to \infty.   
\end{equation*}

\subsection{The simultaneous breadth-first walks} \label{sec:sBFW}

Recall $\pmb{x}^{(n)}$ from \eqref{def:x_n}.
For each $i \in [n]$, let $\xi_i$ have exponential distribution with rate $1/n^{2/3}$, independently over $i \in [n]$. 
Given $\pmb{\xi} := (\xi_1, \dots, \xi_{n})$, define simultaneously for all $s \ge 0$ and $q>0$
\begin{align*}
Z^{n,q}(s) :=& \sum_{i=1}^{n} \frac{1}{n^{2/3}} \mathbb{1}_{(\xi_i/q\, \leq \ s)} -s = \sum_{i=1}^{n} \frac{1}{n^{2/3}} \mathbb{1}_{(\xi_{(i)}/q\, \leq \ s)} -s, 
\end{align*}
where $(\xi_{(i)})_i$ are the ordered $(\xi_{i})_i$.
In words, $Z^{n,q}$ has a unit negative drift and successive positive jumps, which occur precisely at times $(\xi_{(i)}/q)_{i\leq n}$, and where the $i^{\mathrm{th}}$  successive jump is of magnitude ${1}/{n^{2/3}}$.
We call $\big( (Z^{n,q}(s), s \ge 0), q \ge 0\big)$ the \emph{simultaneous breadth-first walks (SBFW)}
(see \cite[\S\ 2]{multcoalnew} for more details).

Fix some $t \in \mathbb{R}$, let $q_n(t) = n^{1/3} + t$ and define
\begin{align*}
	Z^{(n)} \equiv Z^{(n),t} &: s \mapsto q_n(t) \cdot {Z}^{n, q_n(t)}(s),\\
	  B^{(n)} \equiv B^{(n),t} &: s \mapsto {Z}^{(n)}(s) - \inf\limits_{0 \le u \le s} {Z}^{(n)}(u).
\end{align*}

Limic \cite{multcoalnew}, using an approach rooted in \cite{aldRGMC,EBMC},
proved that the SBFW
encode the multiplicative coalescent process, as $q$ varies from $0$ to $\infty$.
The marginal (or static) result for a fixed $q$ or $q_n(t)$ is much simpler to derive, and was already known to Aldous \cite{aldRGMC}. 
In particular, the ordered vector of the excursion lengths of $B^{(n),t}$ above zero is an element of $l^2_\searrow$ (it has finitely many non-zero components), and it follows the same law as $ \pmb{\mathcal{C}}^{(n)}(q_n(t))$.

For a given $\pmb{x} \in l^2_\searrow$, let us define
\[
	\sigma_r(\pmb{x}) := \sum_i x_i^r, \text{ for } r = 1,2,3.
\]
In our setting $\sigma_1^{(n)} := \sigma_1(\pmb{x}^{(n)}) = n^{1/3}$, $\sigma_2^{(n)} := \sigma_2(\pmb{x}^{(n)}) = 1/n^{1/3}$ and $\sigma_3^{(n)} := \sigma_3(\pmb{x}^{(n)}) = 1/n$.
So, we have:
\[
{\sigma_{3}^{(n)}}/{\left(\sigma_{2}^{(n)}\right)^{3}} =1, \;\;
\sigma_{2}^{(n)} \rightarrow 0 \text{ and } {\| \pmb{x}^{(n)} \|_{\infty}}/{\sigma_{2}^{(n)}} \rightarrow 0,
\]
as $n \to \infty$.
For such a sequence $\big( \pmb{x}^{(n)} \big)_{n \ge 1}$ it is well-known (see \cite{aldRGMC,EBMC,multcoalnew}) that
for any $t \in \mathbb{R}$
\begin{equation*}
    \frac{1}{\sigma_2(\pmb{x}^{(n)})} \pmb{Z}^{n, q_n(t)}  \xrightarrow[n \to \infty]{d} W^{t}, 
\end{equation*}
where the convergence is with respect to the usual $J_1$-Skorohod topology on the space of càdlàg functions on $[0,\infty)$.
Figure \ref{fig:plot} shows a simulation of $Z^{(n)}$ and $B^{(n)}$, for $t = 1$ and $n = 1000$.

As a direct consequence, we also have
\begin{equation}
\label{eq:scaling_limit2}
    Z^{(n)} = \left( \frac{1}{\sigma_2(\pmb{x}^{(n)})} + t \right) \pmb{Z}^{n, q_n(t)} \xrightarrow[n \to \infty]{d} W^{t}, \text{ for each } t \in \mathbb{R},
\end{equation}
and
\begin{equation}
\label{eq:scaling_limit2B}
   B^{(n)}  \xrightarrow[n \to \infty]{d} B^{t}, \text{ for each } t \ge 0,
\end{equation}
where for \eqref{eq:scaling_limit2B} we use the fact that the past infimum of $Z^{(n)}$ is continuous and thus converges uniformly on compacts.
Moreover, using the Skorokhod representation theorem we can and will assume in the rest of the paper,
without a loss of generality, that $Z^{(n)}$, $B^{(n)}$, $W^t$ and $B^t$ are all defined on the same probability space, and that the convergence in \eqref{eq:scaling_limit2}--\eqref{eq:scaling_limit2} holds almost surely.

\begin{figure}
	\centering
	\includegraphics[width=0.9\linewidth]{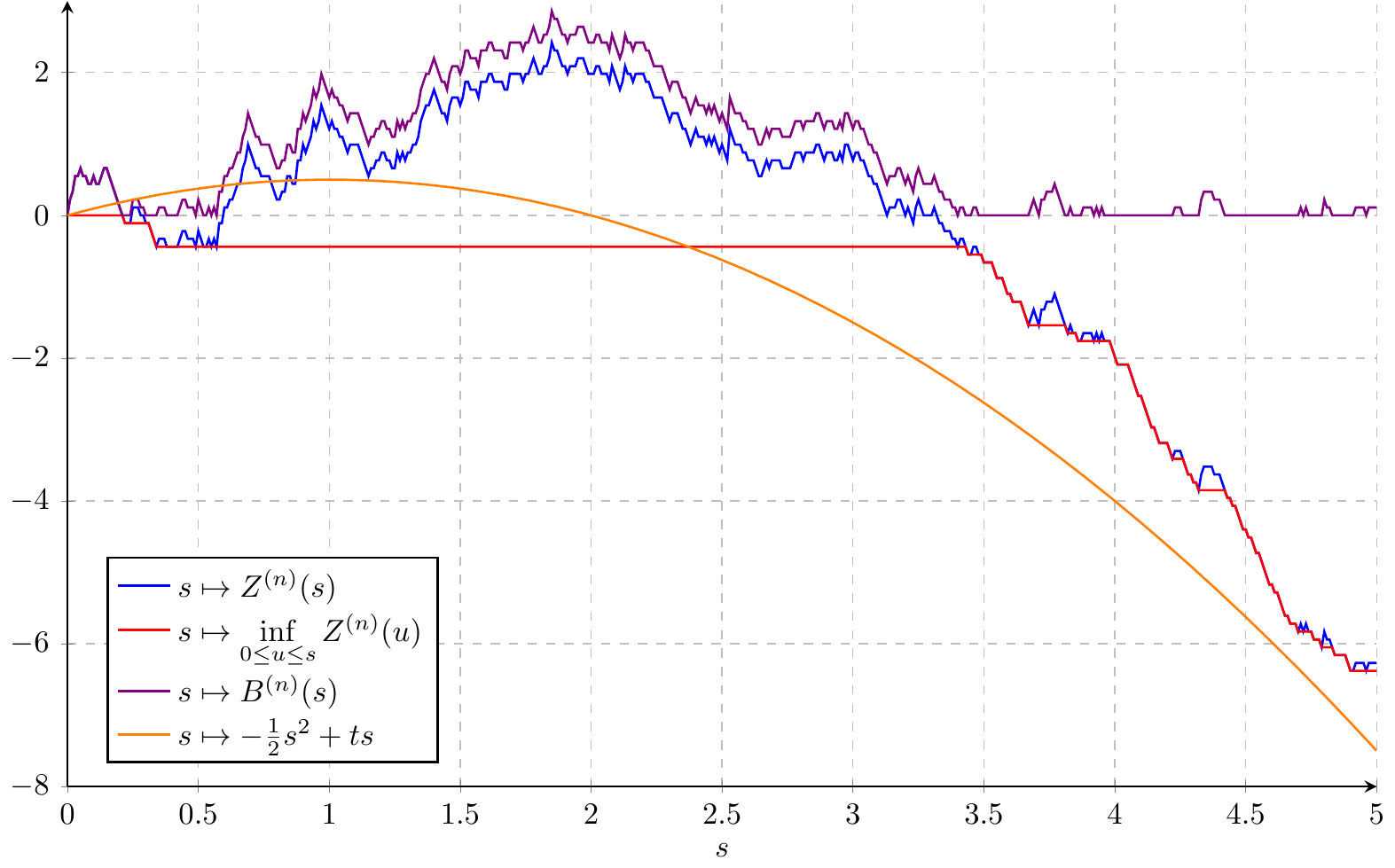}
	\caption{Plots of the curves $Z^{(n)}$, $\inf\limits_{0 \leq u \leq \cdot} Z^{(n)}$, $B^{(n)}$ and the parabola $s \mapsto - \frac{1}{2} s^2 + ts$, for $t = 1$ and with $n = 1000$.
	$Z^{(n)}$ converges to $B^t$, defined by \eqref{eq:def_W}, in the $J_1$ Skorohod topology.}
	\label{fig:plot}
\end{figure}

\subsubsection{Superimposing the surplus edges}

Let us denote by $S_i^{(n)}$ the area enclosed by the curve $s \mapsto {B}^{(n)}(s)$ during its $i^{\mathrm{th}}$ longest (we use largest interchangeably) excursion above zero.
Note that  $S_i^{(n)}$ is also equal to the area enclosed by the $i^{\mathrm{th}}$ largest excursion of ${Z}^{(n)}$ above its past infimum.
Let $a_i^{(n)}$ (resp.~$b_i^{(n)}$) be the lower (resp.~upper) limit of the $i^{\mathrm{th}}$ largest excursion interval.
Hence, the points $a_i^{(n)},b_i^{(n)}$ satisfy $B^{(n)}(a_i^{(n)})= B^{(n)}(b_i^{(n)})=0$ and 
$B^{(n)}(s)>0$, on $(a_i^{(n)},b_i^{(n)})$.
Let
 $X_i^{(n)}= b_i^{(n)} - a_i^{(n)} $ be the \emph{length} of the $i^{\mathrm{th}}$ longest excursion.
Proposition 5 in \cite{multcoalnew} implies that the vector $\pmb{X}^{(n)} = (X_1^{(n)}, X_2^{(n)}, \dots) \in l^2_\searrow$ has the law of the MC at time $q_n(t)$ started from $\pmb{x}^{(n)}$, i.e.\ the same law as $\pmb{\mathcal{C}}^{(n)}(q_n(t))$, the vector of component sizes of $\mathcal{G}^{(n)}(q_n(t))$ or $\mathcal{MG}^{(n)}(q_n(t))$.
Moreover, if we denote by $N_i^{(n)}$ the number of (unlimited) surplus edges in the (corresponding) $i^{\mathrm{th}}$ largest connected component, 
then given $B^{(n)}$ (or $Z^{(n)}$), $N_i^{(n)}$ has Poisson distribution with parameter (mean) 
\[
    {S}_i^{(n)} = \int_{a_i^{(n)}}^{b_i^{(n)}} {B}^{(n)}(s) \, \mathrm{d}s= 
    \int_{a_i^{(n)}}^{b_i^{(n)}} \Big( {Z}^{(n)}(s)- \inf_{u\leq a_i^{(n)}}{Z}^{(n)}(u) \Big) \,\mathrm{d}s,
\]
independently over different $i$ (see Theorem 1.1 in \cite{Corujo_Limic_2023a}).

If we denote by $\mathcal{C}_i^{(n)}$ the $i^{\mathrm{th}}$ largest component of the corresponding (coupled) random graph,
recall that 
\(
    X_i^{(n)} = {|\mathcal{C}_i^{(n)}|}
\),
where $|\mathcal{C}_i^{(n)}|$ is the total mass of $\mathcal{C}_i^{(n)}$.
It it often convenient (and compatible with the existing literature) to denote by $\operatorname{SP}(\mathcal{C}_i^{(n)})$ the number of surplus edges in $\mathcal{C}_i^{(n)}$.
We can rewrite the above facts as follows:
\[
    \operatorname{SP}(\mathcal{C}_i^{(n)}) \mid B^{(n)} \overset{d}{=} N_i^{(n)} \sim {\rm Poisson}\left( \int_{a_i^{(n)}}^{b_i^{(n)}} B^{(n)}(s) \,\mathrm{d}s \right),
\]
where the counters $(\operatorname{SP}(\mathcal{C}_i^{(n)}))_{i \ge 1}$ are conditionally independent given $B^{(n)}$ (or equivalently, given $Z^{(n)}$) over $i$.

We will write $\pmb{N}^{(n)}$ for the vector of surplus edge counts $(N_1^{(n)}, N_2^{(n)}, \dots)$.
The above observations lead to the conclusion that the natural embedding of $(\pmb{X}^{(n)}, \pmb{N}^{(n)})$ in $\mathbb{U}^0_\searrow$ has the same law as $\mathcal{Z}^{(n)}(q_n(t))$. 
Recall $\pmb{\mathcal{X}}$, $\pmb{\mathcal{N}}$ and $\pmb{\mathcal{Z}}$ from \eqref{def:curlyZ}.
Thus,
\begin{equation} \label{eq:metric_dU}    \operatorname{d}_\mathbb{U} \Big( (\pmb{X}^{(n)}, \pmb{N}^{(n)}) , \pmb{\mathcal{Z}}(t) \Big) = \left( \sum_{i \ge 1} \Big( X_i^{(n)} - \mathcal{X}_i(t) \Big)^2 \right)^{1/2} + \sum_{i \ge 1} \Big| X_i^{(n)} N_i^{(n)} - \mathcal{X}_i(t) \mathcal{N}_i(t) \Big|. 
\end{equation}
Due to Lemma 11 in \cite{multcoalnew} we know that
\begin{equation}
\label{eq:Xconvergence}
    \operatorname{d} ( \pmb{X}^{(n)} , \pmb{\mathcal{X}}(t) ) = \left( \sum_{i \ge 1} \Big( X_i^{(n)} - \mathcal{X}_i(t) \Big)^2 \right)^{1/2} \xrightarrow[n \to \infty]{\mathbb{P}} 0. 
\end{equation}
Due to the assumption about almost sure convergence that we made following \eqref{eq:scaling_limit2B},
let us note 
that on the same probability space we can assume that the convergence in \eqref{eq:Xconvergence}
also holds almost surely.
As a consequence, we have the almost sure convergence for each $i\geq 1$:
\begin{align*}
    X_i^{(n)} \xrightarrow[n \to \infty]{\mathrm{a.s.}} \mathcal{X}_i(t) 
    \text{ and } 
    N_i^{(n)} \xrightarrow[n \to \infty]{\mathrm{a.s.}} \mathcal{N}_i(t).
\end{align*}
So, in order to prove that $\operatorname{d}_\mathbb{U} ( (\pmb{X}^{(n)}, \pmb{N}^{(n)}) , \mathcal{Z}(t)) \to 0$ in probability, we will verify 
\begin{equation}
\label{eq:tightness}
    \lim_{N \to \infty} \limsup_{n} \mathbb{P}\left[ \sum_{i \ge N} X_i^{(n)} N_i^{(n)} > \epsilon \right] = 0,
\end{equation}
for each $\epsilon > 0$.
Moreover, large (long) excursions of $B^t$ and $B^{(n)}$ occur first, in the sense of Proposition 14 in \cite{EBMC}, so that when $N\uparrow \infty$ the sum in \eqref{eq:tightness} is only over short (small) excursions.
Hence, we only need to check that for every $\epsilon > 0$ 
\[
 \lim_{\delta \to 0} \limsup_{n}  \,\mathbb{P} \left[ \sum_{i: X_i^{(n)} < \delta} X_i^{(n)} \cdot N_i^{(n)} > \epsilon \right] = 0.
\]
Using the Markov inequality, we obtain
\[
    \mathbb{P} \left[ \sum_{i: X_i^{(n)} < \delta} X_i^{(n)} \cdot N_i^{(n)} > \epsilon \right] \le
    \frac{1}{\epsilon} \, \mathbb{E} \left[ \sum_{i \ge 1} X_i^{(n)} \cdot N_i^{(n)} \cdot \mathbb{1}_{\{X_i^{(n)} < \delta\}} \right].
\]
Thus, \eqref{eq:tightness} and Theorem \ref{thm:main_result} will follow from
\begin{equation}\label{eq:tight_to_prove}
    \lim_{\delta \to 0} \limsup_{n} \mathbb{E} \left[ \sum_{i \ge 1} X_i^{(n)} \cdot N_i^{(n)} \cdot \mathbb{1}_{\{X_i^{(n)} < \delta\}} \right] = 0. 
\end{equation}

Let $\mathcal{C}(v)$ denote the connected component which contains the vertex $v$, and let $|\mathcal{C}(v)|$ be the total mass of $\mathcal{C}(v)$.
Let $V_n$ by a vertex in $[n]$ (of mass $n^{-2/3}$) picked uniformly at random, i.e.
    $$
        \mathbb{P} (V_n=j) = \frac{1}{n}, \ j \in [n],
    $$
and let $V_n$ be independent of $Z^{(n)}$ and the coupled random graph.
The next result, inspired by Dhara et al.~\cite[Lemma 5.18]{Dhara2017}, is the first key ingredient in the proof of \eqref{eq:tight_to_prove}.

\begin{lemma}\label{lemma:random_vertex}
    If $f$ is a real valued function on the discrete space of connected graphs with fewer than $n$ vertices, then
    \begin{equation*}
        \mathbb{E} \big[ f\big(\mathcal{C}(V_n) \big) \big] = \frac{1}{n^{1/3}} \mathbb{E} \left[ \sum_{i \ge 1} |\mathcal{C}_i^{(n)}| \cdot f( \mathcal{C}_i^{(n)} ) \right],
    \end{equation*}
    where the sum runs over the sequence of connected components of the random graph.
\end{lemma}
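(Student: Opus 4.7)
The plan is to condition on the random graph, use independence of $V_n$ from it, and then regroup the resulting sum over vertices into a sum over connected components; the factor $n^{-1/3}$ will emerge from the mass rescaling $|\mathcal{C}_i^{(n)}| = (\#\text{vertices in } \mathcal{C}_i^{(n)})/n^{2/3}$.

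More precisely, since $V_n$ is uniform on $[n]$ and independent of the random graph (hence of the partition into components), I would write
\[
 \mathbb{E}\!\left[ f\big(\mathcal{C}(V_n)\big) \right] = \sum_{j=1}^n \frac{1}{n}\, \mathbb{E}\!\left[ f\big(\mathcal{C}(j)\big) \right] = \frac{1}{n}\, \mathbb{E}\!\left[ \sum_{j=1}^n f\big(\mathcal{C}(j)\big) \right]\!.
\]
Next I would observe that every vertex $j\in[n]$ lies in exactly one component, so the inner sum can be regrouped component-by-component: if a component $\mathcal{C}_i^{(n)}$ contains $k_i$ vertices then all $k_i$ of them contribute the same value $f(\mathcal{C}_i^{(n)})$, giving
\[
 \sum_{j=1}^n f\big(\mathcal{C}(j)\big) = \sum_{i \ge 1} k_i \cdot f\big(\mathcal{C}_i^{(n)}\big).
\]
Because vertices have mass $n^{-2/3}$, one has $k_i = n^{2/3} |\mathcal{C}_i^{(n)}|$, and the identity follows after substituting and using $n^{2/3}/n = n^{-1/3}$.

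There is no real obstacle here: this is a standard size-biasing identity, and the only subtlety worth checking is that the function $f$ is well-defined on $\mathcal{C}(V_n)$ (guaranteed, since the component containing $V_n$ has at most $n$ vertices, so it lies in the domain of $f$ as long as the component is a proper subgraph — and even if $\mathcal{C}(V_n)=[n]$, one can extend $f$ arbitrarily, as the formula then forces both sides to have a well-defined meaning once one verifies Fubini's theorem is applicable, which is immediate when $f$ is nonnegative or the sum on the right is absolutely summable). I would simply apply the identity in the subsequent proof to a specific nonnegative $f$ (presumably involving surplus edges and the indicator $\mathbb{1}_{\{|\mathcal{C}_i^{(n)}| < \delta\}}$), in which case Tonelli makes the exchange of sum and expectation automatic.
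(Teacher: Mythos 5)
Your proof is correct and is essentially the same size-biasing computation as the paper's: the paper introduces an auxiliary component $\Theta$ picked with probability proportional to its mass and then identifies $\mathcal{C}(V_n)$ with $\Theta$ conditionally on the graph, which amounts to exactly your regrouping of the vertex sum $\sum_{j=1}^n f(\mathcal{C}(j))$ into a component sum with weights $k_i/n = |\mathcal{C}_i^{(n)}|/n^{1/3}$. Your direct version is, if anything, slightly more streamlined, and your remark about applying the identity to a nonnegative $f$ matches how it is used later.
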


\begin{proof}
    Denote by $\mathcal{F}^{(n)}$ the $\sigma$-algebra generated by $Z^{(n)}$ and the coupled random graph.
    Denote by $\Theta$ a randomly chosen connected component in the random graph, using the size-biased distribution
    \[
    \mathbb{P} \Big[ \Theta = \mathcal{C}_i \mid \mathcal{F}^{(n)} \Big] = \frac{ |\mathcal{C}_i^{(n)}| } {\sigma_1^{(n)}} = \frac{ |\mathcal{C}_i^{(n)}| } {n^{1/3}}.
    \]
Then
\begin{align*}
    \mathbb{E} \Big[ f(\Theta) \Big] &= \mathbb{E} \Big[ \mathbb{E} \big[ f(\Theta) \mid \mathcal{F}^{(n)} \big] \Big] = 
    \sum_{i \ge 1} \mathbb{E} \Big[ \mathbb{E} \big[ f(\Theta) \mid \mathcal{F}^{(n)} \big] \mathbb{1}_{\{ \Theta = C_i^{(n)} \}} \Big] = 
    \sum_{i \ge 1} \mathbb{E} \Big[ \mathbb{E} \big[ f(\Theta) \mathbb{1}_{\{ \Theta = C_i^{(n)} \}} \mid \mathcal{F}^{(n)} \big]  \Big] \\
    &= \sum_{i \ge 1} \mathbb{E} \Big[ \mathbb{E} \big[ f(C_i^{(n)}) \mathbb{1}_{\{ \Theta = C_i^{(n)} \}} \mid \mathcal{F}^{(n)} \big]  \Big] = \sum_{i \ge 1} \mathbb{E} \big[   \mathbb{P} \big[ \Theta = C_i^{(n)} \mid \mathcal{F}^{(n)} \big] f(C_i^{(n)}) \Big] \\
    &= \frac{1}{n^{1/3}} \mathbb{E} \Big[ \sum_{i \ge 1} |C_i^{(n)}| f(C_i^{(n)}) \Big].
\end{align*}
Clearly the sum in last identity could go over the connected components of the graph in any given order, and in particular in the ``chronological'' order encoded by $Z^{(n)}$ (or equivalently, by $B^{(n)}$).
    
The proof is concluded by showing that $\mathcal{C}(V_n) \overset{d}{=} \Theta$ given $\mathcal{F}^{(n)}$.
Indeed,
\begin{align*}
    \mathbb{P} \Big[ \mathcal{C}(V_n) = \mathcal{C}_i^{(n)} \mid \mathcal{F}^{(n)} \Big] &= \mathbb{P} \Big[ V_n \in \mathcal{C}_i^{(n)} \mid \mathcal{F}^{(n)} \Big] = \frac{1}{n^{1/3}} \sum_{j \in \mathcal{C}_j^{(n)}} \frac{1}{n^{2/3}} = \frac{|\mathcal{C}_i^{(n)}|}{n^{1/3}}. \qedhere
\end{align*}
\end{proof}

Recall that $|G|$ and $\operatorname{SP}(G)$ denote the total mass and the number of surplus edges, respectively, of a connected (sub)graph $G$.
Apply Lemma \ref{lemma:random_vertex} with $f(\mathcal{C}):= \operatorname{SP}(\mathcal{C})\mathbb{1}_{\{|\mathcal{C}| < \delta \}}$ to conclude
\begin{align}
   \mathbb{E} \left[ \sum_{i \ge 1} X_i^{(n)} \cdot N_i^{(n)} \cdot \mathbb{1}_{\{X_i^{(n)} < \delta\}} \right] 
   &= n^{1/3} \cdot \mathbb{E} \left[ \operatorname{SP}\big( \mathcal{C}(V_n) \big) \mathbb{1}_{\{|\mathcal{C}(V_n)| < \delta \}} \right]. \label{eq:with_sigma1}
\end{align}
Identity \eqref{eq:with_sigma1} is crucial in our approach.
We gradually reduced the study of the second summand (a tail sum) in the right-side of \eqref{eq:metric_dU}, to that of the component of a randomly chosen vertex, and the latter is a much more accessible quantity in our setting.

Another (simple but) key observation is that the (simultaneous) breadth-first walk picks the starting vertex of its exploration process in the size-biased order \cite[\S\ 2]{multcoalnew}.
Since all the vertices have equal mass, in our setting the size-biased pick equals to the uniform pick from $[n]$.
Thus, the first excursion above zero of $B^{(n)}$ naturally encodes $\mathcal{C}(V_n)$ in the sense that
\begin{align*}
    \Big( |\mathcal{C}(V_n)|, \operatorname{SP} \left( \mathcal{C}(V_n) \right) \Big) \overset{d}{=} 
    \left( T_1^{(n)}, \int_{\xi_{(1)}/q_n(t)}^{\xi_{(1)}/q_n(t) + T_1^{(n)}} B^{(n)}(s) \,\mathrm{d}s \right)
\end{align*}
where $T_1^{(n)} := \inf\{ s \ge 0: B^{(n)}(s + \xi_{(1)}/q_n) = 0 \}$ is the length of the first excursion of $B^{(n)}$ above zero, i.e.\ the one started at $\xi_{(1)}/q_n(t)$.
Therefore, we have 
\begin{equation}\label{eq:first_excursion}
\mathbb{E} \left[ \operatorname{SP}\big( \mathcal{C}(V_n) \big) \mathbb{1}_{\{|\mathcal{C}(V_n)| < \delta \}} \right]= {n^{1/3}}\cdot \mathbb{E} \left[ \int_{\xi_{(1)}/q_n(t)}^{\xi_{(1)}/q_n(t) + T_1^{(n)}} B^{(n)}(s) \,\mathrm{d}s \cdot  \mathbb{1}_{\{T_1^{(n)} < \delta \}}\right].
\end{equation}
So verifying \eqref{eq:tight_to_prove} reduces to proving the following lemma.
\begin{lemma}\label{lemma:key_lemma}
    \[
          \lim_{\delta \to 0} \limsup_{n} n^{1/3} \cdot \mathbb{E} \left[ \int_{\xi_{(1)}/q_n(t)}^{\xi_{(1)}/q_n(t) + T_1^{(n)}} B^{(n)}(s) \,\mathrm{d}s \cdot  \mathbb{1}_{\{T_1^{(n)} < \delta \}}\right] = 0
    \]
\end{lemma}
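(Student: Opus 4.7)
The plan is to bound the first-excursion integral by a ``length-times-height'' product, apply Cauchy--Schwarz, and then reduce each resulting factor to a manageable size-biased sum by repeatedly invoking Lemma~\ref{lemma:random_vertex}. Setting $a_1^{(n)} := \xi_{(1)}/q_n(t)$ and $M_1^{(n)} := \sup_{s \in [a_1^{(n)},\, a_1^{(n)} + T_1^{(n)}]} B^{(n)}(s)$ for the maximum of $B^{(n)}$ on its first excursion, the pointwise bound $\int_{a_1^{(n)}}^{a_1^{(n)}+T_1^{(n)}} B^{(n)}(s)\,\mathrm{d}s \le T_1^{(n)}\, M_1^{(n)}$ followed by Cauchy--Schwarz gives
\[
n^{1/3}\,\mathbb{E}\!\left[T_1^{(n)} M_1^{(n)} \mathbb{1}_{\{T_1^{(n)} < \delta\}}\right] \le n^{1/3} \sqrt{\mathbb{E}\!\left[(T_1^{(n)})^2 \mathbb{1}_{\{T_1^{(n)} < \delta\}}\right]\,\mathbb{E}\!\left[(M_1^{(n)})^2 \mathbb{1}_{\{T_1^{(n)} < \delta\}}\right]},
\]
and it suffices to control each factor.

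For the length factor, Lemma~\ref{lemma:random_vertex} applied to $f(\mathcal{C}) = |\mathcal{C}|^2 \mathbb{1}_{\{|\mathcal{C}| < \delta\}}$ yields
\[
\mathbb{E}\!\left[(T_1^{(n)})^2 \mathbb{1}_{\{T_1^{(n)} < \delta\}}\right] = \frac{1}{n^{1/3}} \mathbb{E}\!\left[\sum_{i \ge 1} (X_i^{(n)})^3 \mathbb{1}_{\{X_i^{(n)} < \delta\}}\right] \le \frac{\delta}{n^{1/3}} \,\mathbb{E}\!\left[\sum_{i\ge 1}(X_i^{(n)})^{2}\right].
\]
The last expectation is bounded uniformly in $n$ via the a.s.\ convergence $\pmb{X}^{(n)}\to\pmb{\mathcal{X}}(t)$ in $l^2_{\searrow}$ together with a standard uniform-integrability argument for $\sigma_2(\pmb{X}^{(n)})$ (alternatively, a direct second-moment computation along the multiplicative coalescent started from $\pmb{x}^{(n)}$), so $\mathbb{E}[(T_1^{(n)})^2\mathbb{1}_{\{T_1^{(n)}<\delta\}}] = O(\delta/n^{1/3})$; applying the same lemma to $f(\mathcal{C})=|\mathcal{C}|$ gives $\mathbb{E}[T_1^{(n)}] = O(1/n^{1/3})$.

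For the height factor, I would establish the Brownian-type estimate $\mathbb{E}[(M_1^{(n)})^{2}\mid T_1^{(n)}] \le C\,T_1^{(n)}$ with $C$ independent of $n$. Viewing $B^{(n)}$ on its first excursion as the walk $h_j := B^{(n)}(\tau_j^+)$ indexed by its jump times, per-jump increments have variance of order $1/n^{2/3}$ and the total deterministic drift is of order $|t|\,T_1^{(n)}$ over the $k_1 = T_1^{(n)} n^{2/3}$ jumps; Doob's $L^2$-inequality applied to the martingale part of $(h_j)$ then yields the announced bound for the unconditioned walk, and the excursion conditioning can be removed via a Vervaat-type cyclic-shift bijection (or handled directly through the SBFW analysis of \cite{multcoalnew}). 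Together with the length-factor estimate this produces $\mathbb{E}[(M_1^{(n)})^2 \mathbb{1}_{\{T_1^{(n)}<\delta\}}] \le C\,\mathbb{E}[T_1^{(n)}] = O(1/n^{1/3})$.

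Plugging both estimates into the Cauchy--Schwarz bound gives
\[
n^{1/3}\,\mathbb{E}\!\left[T_1^{(n)} M_1^{(n)} \mathbb{1}_{\{T_1^{(n)}<\delta\}}\right] \le n^{1/3}\sqrt{O(\delta/n^{1/3})\cdot O(1/n^{1/3})} = O(\sqrt{\delta}),
\]
which vanishes as $\delta\to 0$, uniformly in $n$, proving the lemma. The hardest step is the maximal estimate $\mathbb{E}[(M_1^{(n)})^2\mid T_1^{(n)}]\le C\,T_1^{(n)}$: although the unconditioned walk version is a routine application of Doob's inequality, rigorously transferring it to the excursion-conditioned process requires some care (via a Vervaat transformation or a direct SBFW argument), and is the most technical part of the proof.
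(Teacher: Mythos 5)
Your overall architecture (bound the area by $T_1^{(n)} M_1^{(n)}$, Cauchy--Schwarz, then estimate the two factors separately) is coherent, and your length-factor estimate via Lemma \ref{lemma:random_vertex} with $f(\mathcal{C}) = |\mathcal{C}|^2\mathbb{1}_{\{|\mathcal{C}|<\delta\}}$ is a nice use of the size-biasing identity. But the proof has a genuine gap exactly where you locate the difficulty: the maximal estimate $\mathbb{E}\big[(M_1^{(n)})^2 \,\big|\, T_1^{(n)}\big] \le C\, T_1^{(n)}$ is asserted, not proved, and the tools you point to do not straightforwardly deliver it. Doob's $L^2$-inequality applied to the unconditioned walk over a horizon of length $\delta$ only gives $\mathbb{E}\big[(M_1^{(n)})^2\mathbb{1}_{\{T_1^{(n)}<\delta\}}\big] = O(\delta)$, and with that bound your final display becomes $n^{1/3}\sqrt{(\delta/n^{1/3})\cdot \delta} = \delta\, n^{1/6} \to \infty$; so your argument genuinely requires the bound in terms of the (typically $\Theta(n^{-1/3})$-sized) random length $T_1^{(n)}$, i.e.\ a maximal inequality for the walk \emph{conditioned on its excursion length}, uniformly in $n$. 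Such bounds are known for conditioned critical random-walk excursions (e.g.\ via height/width estimates for conditioned Galton--Watson trees), but transferring them to the present inhomogeneous, continuous-time, drift-perturbed walk via a Vervaat-type transform is a substantial piece of work that your sketch does not carry out; as written, the hardest step of the lemma is outsourced to an unproved claim. A secondary, more minor point: the uniform bound $\sup_n \mathbb{E}\big[\sum_i (X_i^{(n)})^2\big] < \infty$ cannot be deduced from the a.s.\ $l^2_\searrow$-convergence alone (uniform integrability of $\sigma_2(\pmb{X}^{(n)})$ is precisely what would need proving); it is a classical susceptibility estimate at criticality and should be invoked or proved as such.

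For comparison, the paper's proof avoids both difficulties by never conditioning on the excursion length and never needing a maximal inequality. It dominates $Z^{(n)}$ pathwise by a process $\widetilde{Z}^{(n)}$ built from $n$ independent Poisson processes, rescales the latter into a compensated rate-$1$ Poisson process $S_u - u$, and then bounds the expected area of the first excursion restricted to $\{\text{length} \le T\}$ by $T$ via optional stopping: writing the post-excursion-start process as a nonnegative stopped martingale $M^{\Xi\wedge T}$ with $\mathbb{E}[M^{\Xi\wedge T}_u]=1$, Fubini gives $\mathbb{E}\big[\int_0^T M^{\Xi\wedge T}_u\,\mathrm{d}u\big] = T$. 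Undoing the scaling yields the bound $\delta$ directly. If you want to salvage your route, the missing ingredient is a uniform-in-$n$ version of $\mathbb{E}[(\text{excursion max})^2 \mid \text{excursion length} = \ell] \le C\ell$ for these walks; otherwise the martingale-area argument is both shorter and more robust.
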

Even if the 
integral of $B^{(n)}$ over the first excursion is expected to be negligible,  we do not know a priori the rate at which it vanishes.
In the next section we prove that the quantities in \eqref{eq:first_excursion} are of order at most $\delta$ as $n \to \infty$.

\subsubsection{Proof of Lemma \ref{lemma:key_lemma}}

For simplicity and without any loss of generality, we will consider the case where $t = 0$, and thus $q_n := q_n(0) = 1/\sigma_2^{(n)} = n^{1/3}$.
Let $\widetilde{Z}^{(n)}$ be an auxiliary process defined as
\[
    \widetilde{Z}^{(n)}(s) = n^{1/3}\left(\sum_{i = 1}^n \frac{1}{n^{2/3}} N_i (s)  - s\right), \text{ for every } s \ge 0,
\]
where $(N_i(s), s\geq 0)$, $i\in [n]$
are $n$ independent Poisson processes with rate $1/n^{1/3} = q_n(0) / n^{2/3}$.
We can couple $Z^{(n)}$ and $\widetilde{Z}^{(n)}$, by declaring that $\xi_i/q_n(0)$ is the time of the first jump of $N_i(s)$, and then clearly $Z^{(n)}(s) \le \widetilde{Z}^{(n)}(s)$, for every $s \ge 0$, a.s.
The jumps of $\widetilde{Z}^{(n)}$ are of size $1/n^{1/3}$, and they arrive at rate $n^{2/3}$.
Indeed, if $T_j$ is the $j^{\mathrm{th}}$ jump time of $\widetilde{Z}^{(n)}$, then $T_{j+1}-T_j$ equals the minimum of $n$ independent residual exponential clock, where each of these $n$ competing clocks rings at rate $1/n^{1/3}$.
Consider the scaled process
\begin{align*}
    \widehat{Z}^{(n)}(u) &:= n^{1/3} \widetilde{Z}^{(n)} \left( \frac{u}{n^{2/3}} \right)  
    = S_u - u,
\end{align*}
where $(S_u, u \ge 0)$ is a Poisson process of rate $1$.
The excursions of $\widehat{Z}^{(n)}$ above past infimum are $n^{1/3}$ times higher and $n^{2/3}$ times longer than the corresponding excursions of $\widetilde{Z}^{(n)}$.
Therefore, their respective areas are $n$ times larger than the areas of the corresponding excursions of $\widetilde{Z}^{(n)}$.

More precisely, let $\widetilde{B}^{(n)}$ (resp.\ $\widehat{B}^{(n)}$) be the process $\widetilde{Z}^{(n)}$ (resp.\ $\widehat{Z}^{(n)}$) reflected above its past infimum.
We are interested in the integral of $\widetilde{B}^{(n)}$ over its first excursion.
If this excursion happens on the interval $[a^{(n)}, b^{(n)}]$, then the first excursion of $\widehat{B}^{(n)}$ happens on $[n^{2/3} a^{(n)}, n^{2/3} b^{(n)}]$ and $\widehat{B}^{(n)}$ is $n^{1/3}$ times larger than $\widetilde{B}^{(n)}$ on this interval.
Hence,
\begin{align*}
    \int\limits\limits_{n^{2/3} a^{(n)} }^{n^{2/3} b^{(n)}} \widehat{B}^{(n)} (u) \mathrm{d} u = \int\limits_{n^{2/3} a^{(n)} }^{n^{2/3} b^{(n)}} n^{1/3} \widetilde{B}^{(n)} \left( \frac{u}{n^{2/3}} \right) \mathrm{d} u = n \int\limits_{ a^{(n)} }^{  b^{(n)}} \widetilde{B}^{(n)} (s) \mathrm{d} s,
\end{align*}
where the last identity comes from the change of variables $s = u/n^{2/3}$.
Hence,
\begin{equation*}
\int\limits_{a^{(n)}_n}^{b^{(n)}} \widetilde{B}^{(n)} (s) \mathrm{d} s \cdot \mathbb{1}_{\{ b^{(n)} - a^{(n)} < \delta\}} = 
    \frac{1}{n} \int\limits_{n^{2/3} a_n}^{ n^{2/3} b_n } \widehat{B}^{(n)} (u) \mathrm{d} u \cdot \mathbb{1}_{\{ b^{(n)} - a^{(n)} < \delta\}}.
\end{equation*}
Furthermore, due to the coupling we established between $Z^{(n)}$ and $\widetilde{Z}^{(n)}$ described at the beginning of this section, we have
\begin{equation}\label{eq:stochastic_order}   
0\leq \int_{\xi_{(1)}/q_n}^{\xi_{(1)}/q_n + T_1^{(n)}} B^{(n)}(s) \,\mathrm{d}s \cdot  \mathbb{1}_{\{T_1^{(n)} < \delta \}} \ \le \  \int\limits_{a^{(n)}}^{b^{(n)}} \widetilde{B}^{(n)} (s) \mathrm{d}s \cdot \mathbb{1}_{\{b^{(n)} - a^{(n)} < \delta \}}. 
\end{equation}

The next result gives us a bound for the expectation of RHS in \eqref{eq:stochastic_order}.
Let $(S_u - u)_{u \ge 0}$ be a compensated rate $1$ Poisson process and let $\underline{\xi}$ and $\overline{\xi}$ be the left and right limits of its first excursion interval.
\begin{proposition}[Upper bound area enclosed by excursion] \label{proposition:upper_bound_standard_new_version}
For each $T \ge 0$ we have
\[
\mathbb{E} \left[ \int\limits_{\underline{\xi}}^{\overline{\xi}} \big( S_u - u + \underline{\xi} \big) \mathrm{d}u \cdot \mathbb{1}_{\{ \overline{\xi} - \underline{\xi} \le T \}} \right] \le T.
\]
\end{proposition}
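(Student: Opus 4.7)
My plan is to reduce the statement to a first-passage computation for a compensated Poisson and then extract the bound from two short applications of optional stopping. Since $\underline{\xi}$ is simply the first arrival time of $S$, the strong Markov property guarantees that $\tilde{S}_v := S_{v+\underline{\xi}} - 1$, $v \ge 0$, is a rate-$1$ Poisson process independent of $\underline{\xi}$. Under the change of variable $v = u - \underline{\xi}$, the integrand $S_u - u + \underline{\xi}$ becomes $Y_v := 1 + M_v$ with $M_v := \tilde{S}_v - v$ a mean-zero martingale, and the excursion length becomes $\tau := \overline{\xi} - \underline{\xi} = \inf\{v > 0 : Y_v = 0\}$, a function of $\tilde{S}$ only. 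Thus the problem reduces to proving
\[
\mathbb{E}\left[\int_0^{\tau} Y_v \, dv \cdot \mathbb{1}_{\{\tau \le T\}}\right] \le T.
\]

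The key step is to apply Itô's integration by parts to $v \cdot Y_v$, which yields the identity
\[
v Y_v = \int_0^{v} Y_s \, ds + \int_0^{v} s \, dM_s,
\]
so that $A_v := \int_0^{v} Y_s \, ds - v Y_v = -\int_0^{v} s \, dM_s$ is a true martingale on $[0, T]$ (the predictable integrand $s \mapsto s$ is bounded by $T$ on this interval). Optional stopping applied to $A$ at the bounded time $\tau \wedge T$ then gives
\[
\mathbb{E}\left[\int_0^{\tau \wedge T} Y_s \, ds \right] = \mathbb{E}\bigl[(\tau \wedge T) \, Y_{\tau \wedge T}\bigr] \le T \cdot \mathbb{E}[Y_{\tau \wedge T}],
\]
where the inequality uses that $Y_{\tau \wedge T} \ge 0$: it equals $0$ on $\{\tau \le T\}$, and is strictly positive on $\{\tau > T\}$ since the first excursion of $Y$ above $0$ is still ongoing at time $T$.

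A second (standard) application of optional stopping, this time to the martingale $Y$ itself at the same bounded time $\tau \wedge T$, yields $\mathbb{E}[Y_{\tau \wedge T}] = Y_0 = 1$, so the right-hand side of the previous display is at most $T$. Combining with the pointwise bound $\int_0^{\tau} Y_v \, dv \cdot \mathbb{1}_{\{\tau \le T\}} \le \int_0^{\tau \wedge T} Y_s \, ds$ (valid since $Y_s \ge 0$ for all $s \le \tau$) completes the proof. I expect no substantive obstacle here: both optional-stopping applications are justified because $\tau \wedge T$ is bounded and both $|Y_v|$ and $|A_v|$ on $[0, T]$ are dominated by a constant multiple of the integrable random variable $1 + \tilde{S}_T$. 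The conceptual heart of the argument is spotting the martingale $A_v$ and then using $(\tau \wedge T) \le T$ to convert an \emph{a priori} unwieldy quantity (recall that $\mathbb{E}[\tau] = \infty$) into the clean product $T \cdot 1$.
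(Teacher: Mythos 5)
Your proof is correct, and it shares with the paper the same essential reduction: shift time to the first jump of $S$, so that the integrand becomes a compensated rate-$1$ Poisson process started from $1$ (your $Y$, the paper's $M$) and the excursion length becomes its first hitting time of $0$. Where you diverge is in how the bound $T$ is extracted. The paper replaces $\int_0^{\Xi} M_u\,\mathrm{d}u\cdot\mathbb{1}_{\{\Xi\le T\}}$ by $\int_0^T M^{\Xi\wedge T}_u\,\mathrm{d}u\cdot\mathbb{1}_{\{\Xi\le T\}}$ (the stopped process vanishes on $(\Xi,T]$ on that event), drops the indicator by non-negativity, and concludes by Fubini--Tonelli together with $\mathbb{E}\bigl[M^{\Xi\wedge T}_u\bigr]=1$; no integration by parts and only one appeal to the (stopped) martingale property. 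You instead keep the integral up to $\tau\wedge T$, convert it into the boundary term $(\tau\wedge T)\,Y_{\tau\wedge T}$ via the It\^o product rule, and then apply optional stopping twice, once to $\int_0^{\cdot}s\,\mathrm{d}M_s$ and once to $M$ itself. Both routes are sound and yield exactly the same constant; the paper's is slightly more economical (Fubini on a non-negative stopped martingale, no stochastic integral needed), while yours isolates the cleaner intermediate identity $\mathbb{E}\bigl[\int_0^{\tau\wedge T}Y_s\,\mathrm{d}s\bigr]=\mathbb{E}\bigl[(\tau\wedge T)\,Y_{\tau\wedge T}\bigr]$, which makes transparent why the a priori dangerous fact $\mathbb{E}[\tau]=\infty$ causes no harm. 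All your optional-stopping applications are legitimate since $\tau\wedge T$ is bounded and the relevant processes are dominated on $[0,T]$ by an integrable variable.
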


Using Proposition \ref{proposition:upper_bound_standard_new_version}, we now complete the proof of Lemma \ref{lemma:key_lemma} as follows.

\begin{proof}[Proof of Lemma \ref{lemma:key_lemma}]

We can bound
\[
     \mathbb{E} \left[ \int\limits_{n^{2/3} a^{(n)}}^{ n^{2/3} b^{(n)} } \widehat{B}^{(n)} (u) \mathrm{d} u \cdot \mathbb{1}_{\{b^{(n)} - a^{(n)}<\delta\}} \right] \le \delta n^{2/3},
\]
and using in addition \eqref{eq:first_excursion}--\eqref{eq:stochastic_order}
we get 
\begin{align*}
n^{1/3}\mathbb{E} \left[ \operatorname{SP}(\mathcal{C}(V_n)) \mathbb{1}_{\{|\mathcal{C}(V_n)|<\delta\}} \right] &\le n^{1/3} \mathbb{E} \left[ \int\limits_{a^{(n)}}^{b^{(n)} } \widetilde{B}^{(n)} (u) \mathrm{d}u \cdot 
 \mathbb{1}_{\{b^{(n)} - a^{(n)}<\delta\}} \right]   \\
&\le n^{1/3}\frac{1 }{n} \mathbb{E} \left[ \int\limits_{n^{2/3} a^{(n)}}^{ n^{2/3} b^{(n)} } \widehat{B}^{(n)} (u) \mathrm{d} u \cdot \mathbb{1}_{\{b^{(n)} - a^{(n)}<\delta\}} \right] \le \frac{1 }{n^{2/3}} \delta n^{2/3} = \delta,
\end{align*}
which proves Lemma \ref{lemma:key_lemma}, and in turn Theorem \ref{thm:main_result}.
\end{proof}

\begin{proof}[Proof of Proposition \ref{proposition:upper_bound_standard_new_version}]
    Both $\underline{\xi}$ and $\overline{\xi}$ are stopping times with respect to the natural filtration $\big(\mathcal{F}^S_u\big)_{u \ge 0}$ of $(S_u)_{u \ge 0}$. 
    Note that the process $(M_u)_{u \ge 0}$, where 
    \[
    M_u := S_{u + \underline{\xi}} - (u + \underline{\xi})  + \underline{\xi} = S_{u + \underline{\xi}} - u, \text{ for every } u\geq 0
    \]
    is a compensated rate $1$ Poisson process started from $1$, and independent of $\mathcal{F}^S_{\underline{\xi}}$.
    Thus,
\[
    \mathbb{E} \left[ \int\limits_{\underline{\xi}}^{\overline{\xi}} (S_u - u + \underline{\xi}) \mathrm{d}u \cdot \mathbb{1}_{\{ \overline{\xi} - \underline{\xi} \le T \}} \right] =
    \mathbb{E} \left[ \int\limits_{0}^{\Xi} M_u \mathrm{d}u \cdot \mathbb{1}_{\{ \Xi \le T \}} \right]\!,
\]
where $\Xi := \inf \{ u \ge 0: M_u = 0 \}$.
Note that $\Xi$ is finite almost surely, but it has infinite expectation.

Let us consider the stopped martingale $(M_u^{\Xi\wedge T})_{u \ge 0}$ where $M^{\Xi\wedge T}_u := M_{u \wedge \Xi \wedge T}$,
and note that the expectation in the previous display is also equal to 
\[
    \mathbb{E} \left[ \int\limits_0^{\Xi} M^{\Xi \wedge T}_u \mathrm{d}u \cdot \mathbb{1}_{\{ \Xi \le T \}} \right]
    =
    \mathbb{E} \left[ \int\limits_0^{\Xi \wedge T} M^{\Xi \wedge T}_u \mathrm{d}u \cdot \mathbb{1}_{\{ \Xi \le T \}} \right] 
    =
    \mathbb{E} \left[ \int\limits_0^T M^{\Xi \wedge T}_u \mathrm{d}u \cdot \mathbb{1}_{\{ \Xi \le T \}} \right] \!.
\]
The last equality is due to the definition of $\Xi$, in general it would be an inequality in the direction which would enable the same conclusion as given below.

The stopped martingale $M^{\Xi \wedge T}$ is non-negative, and it satisfies  $
\mathbb{E}[M^{\Xi \wedge T}_u] = 1, \ \forall u \ge 0$.
Combining the above observations we conclude that 
\[
    \mathbb{E} \left[ \int\limits_{\underline{\xi}}^{\overline{\xi}} (S_u - u + \underline{\xi}) \mathrm{d}u \cdot \mathbb{1}_{\{ \overline{\xi} - \underline{\xi} \le T \}} \right]
=    \mathbb{E} \left[ \int\limits_0^T M^{\Xi \wedge T}_u \mathrm{d}u \cdot \mathbb{1}_{\{ \Xi \le T \}} \right] 
\leq 
 \mathbb{E} \left[ \int\limits_0^T M^{\Xi \wedge T}_u \mathrm{d}u \right]
 = \int\limits_0^T \mathbb{E} \left[M^{\Xi \wedge T}_u\right] \mathrm{d}u  = T,
\]
where due to the non-negativity of  $M^{\Xi \wedge T}$ we have the middle inequality, and also can apply the Fubini\,--\,Tonelli theorem. 
\end{proof}

\section{Proof of Corollary \ref{corol:erdos-renyi_AMC}}

Consider the natural coupling between $\mathcal{MG}^{(n)}$ and $\mathcal{G}^{(n)}$: for any $i\neq j\in [n]$, only the first edge connecting $i$ and $j$ in $\mathcal{MG}^{(n)}$ is accounted for in $\mathcal{G}^{(n)}$.
As already pointed out, both process are graph representations of the multiplicative coalescent, and they have the same connected component structure at all times (while their surplus edges are counted differently).
Set $q_n(t) = n^{1/3} + t$, and let $N_i^{(n)}$ and $\hat{N}_i^{(n)}$ be the numbers of surplus edges in the $i^{\mathrm{th}}$ largest component of $\mathcal{MG}^{(n)}(q_n(t))$ and $\mathcal{G}^{(n)}(q_n(t))$, respectively.

Define $\hat{\pmb{N}}^{(n)} := (\hat{N}_1^{(n)}, \hat{N}_2^{(n)}, \dots)$ so that $(\pmb{X}^{(n)}, \hat{\pmb{N}}^{(n)}) \in \mathbb{U}^0_\searrow$.
Due to Theorem \ref{thm:main_result}, in order to prove Corollary \ref{corol:erdos-renyi_AMC}, we only need to check that
\begin{equation} \label{eq:convergence_ER} 
\operatorname{d}_{\mathbb{U}} \Big( (\pmb{X}^{(n)}, \pmb{N}^{(n)}), ({\pmb{X}}^{(n)}, \hat{\pmb{N}}^{(n)}) \Big) = \sum_{i \ge 1} X_i^{(n)} (N_i^{(n)} - \hat{N}_i^{(n)}) \xrightarrow[n \to \infty]{ \mathbb{P}} 0. 
\end{equation}
As before, we will separately analyze the initial (largest) terms, and the tails of the sum.
Note that
\begin{equation}
\label{eq:tails_ER}
    \sum_{i: X_i^{(n)} < \delta}  X_i^{(n)} (N_i^{(n)} - \hat{N}_i^{(n)})  \le \sum_{i: X_i^{(n)} < \delta}  X_i^{(n)} N_i^{(n)},
\end{equation}
so we can use the results of the previous section to bound the sum in \eqref{eq:tails_ER} and prove that it is negligible as $n\to \infty$. 
Now we focus on the convergence of individual terms in  \eqref{eq:convergence_ER}.
Note that for any $\epsilon > 0$ we have
\begin{equation}
    \label{eq:N_difference_est}
    \mathbb{P} \Big[ X_i^{(n)} (N_i^{(n)} - \hat{N}_i^{(n)}) > \epsilon \Big] \le  \mathbb{P} \Big[N_i^{(n)} - \hat{N}_i^{(n)} \ge 1 \Big] \le \mathbb{E} [N_i^{(n)} - \hat{N}_i^{(n)}].
\end{equation}
We therefore need to control, for any given $i \ge 1$, the difference between the expected numbers of surplus edges in the $i^{\mathrm{th}}$ largest component of $\mathcal{MG}^{(n)}(q_n(t))$, and the (same) $i^{\mathrm{th}}$ largest component of $\mathcal{G}^{(n)}(q_n(t))$.

For this we briefly recall the construction of the surplus edges on the top of the spanning tree coupled to the simultaneous breadth-first walk, from \cite{Corujo_Limic_2023a}.
Given $Z^{(n)}$, the $i^{\mathrm{th}}$ largest component of $\mathcal{MG}^{(n)}(q_n(t))$ is coupled to the $i^{\mathrm{th}}$ largest excursion of $B^{(n)}$ above zero, which occurs during the interval $[a_i^{(n)}, b_i^{(n)}]$ of length $X_i^{(n)}$.
Furthermore, for each $j \in C_i^{(n)}$, the number of self-loops $j\to j$ in $\mathcal{MG}^{(n)}(q_n(t))$ is a Poisson random variable $\zeta_{j,j}$ with expectation 
\(
    \Delta_{j,j} = \frac{q_n(t)}{2 \, n^{4/3}}.
\)
Similarly, for each pair of vertices $j,k \in C_i^{(n)}$, given
$Z^{(n)}$, the number of surplus edges between vertices $j$ and $k$ is a Poisson random variable $\zeta_{j,k}$ with expectation \[
\Delta_{j,k} = (q_n(t) - T_{j,k}) / n^{4/3},
\]
 where $T_{j,k}$ is the
 time at which $j$ and $k$ become connected (by a path in $\mathcal{MG}^{(n)}$).
In particular, we have
\begin{equation}
\label{eq:surplus_edge_rate_bound}
    \Delta_{j,k} \le \frac{q_n(t)}{n^{4/3}}.
\end{equation}
All these surplus counting Poisson random variables are mutually conditionally independent given $Z^{(n)}$.

In order to bound $\mathbb{E} [N_i^{(n)} - \hat{N}_i^{(n)}]$ we will now split the set of 
surplus 
edges into three disjoint subsets.
The first subset are the self-loops, which are only accounted for in  $\mathcal{MG}^{(n)}(q_n(t))$ and never in $\mathcal{G}^{(n)}(q_n(t))$. 
In expectation they amount to
\begin{align*}
    E_1^{(n)} := \mathbb{E} \left[ \sum\limits_{j \in C_i^{(n)}} \zeta_{j,j} \right] = \mathbb{E} \left[ \sum \limits_{j \ge 1} \zeta_{j,j} \cdot \mathbb{1}_{\{ j \in C_i^{(n)} \}}\right] = \frac{q_n(t)}{2 \, n^{4/3}} \mathbb{E}\left[ |C_i^{(n)}| \right] = \frac{q_n(t)}{2 \, n^{2/3}} \mathbb{E}\left[ X_i^{(n)} \right],
\end{align*}
where the second to last identity was obtained via nested conditioning on $Z^{(n)}$.

The second subset consists of the surplus edges in $\mathcal{MG}^{(n)}(q_n(t))$ which are superimposed on the spanning 
edges of $C_i^{(n)}$ (these surplus edges are impossible in $\mathcal{G}^{(n)}(q_n(t))$).
It is here not necessary to know which edges are the spanning ones, but we do know that there are $|C_i^{(n)}|-1$ many,  and that given $Z^{(n)}$, a surplus multi-edge over a spanning edge $\{j,k\}$ is created at a rate 
smaller than $q_n(t)/n^{4/3}$
(see \eqref{eq:surplus_edge_rate_bound}).
Let us denote by $\mathcal{E}^{\mathrm{span}}$ the set of  
spanning edges of 
$C_i^{(n)}$.
Then, again via nested conditioning 
\[
    E_2^{(n)} := \mathbb{E} \left[ \sum\limits_{(j,k) \in \mathcal{E}^{\mathrm{span}}} \zeta_{j,k} \right] = \mathbb{E} \left[ \sum\limits_{(j,k) \in \mathcal{E}^{\mathrm{span}}} \Delta_{j,k} \right] \le \frac{q_n(t)}{n^{4/3}} \mathbb{E} \left[ |C_i^{(n)}| \right] = \frac{q_n(t)}{n^{2/3}} \mathbb{E} \left[ X_i^{(n)} \right].
\]

The third subset consists of the non-spanning non-self-loops edges, for which only the first surplus edge is accounted for in $\mathcal{G}^{(n)}(q_n(t))$, while in $\mathcal{MG}^{(n)}(q_n(t))$ there are potentially additional multiple surplus edges superimposed.
Recall that if $\zeta$ is a
Poisson random variable with expectation $\Delta$, then
\(
    \mathbb{E}[\zeta - \zeta \wedge 1] = \Delta - (1 - \mathrm{e}^{-\Delta}) \le {\Delta^2}/{2}.
\)
Hence, again via nested conditioning and using \eqref{eq:surplus_edge_rate_bound} we get
\begin{align*}
    E_3^{(n)} := \mathbb{E} \left[ \sum\limits_{ \substack{j \neq k \in C_i^{(n)} \\ (j,k) \notin \mathcal{E}^{\mathrm{span}} } }    \zeta_{j,k} - \zeta_{j,k} \wedge 1 \right] &\le \mathbb{E} \left[ \sum\limits_{ \substack{j \neq k \in C_i^{(n)} \\ (j,k) \notin \mathcal{E}^{\mathrm{span}} } } \frac{(\Delta_{j,k})^2}{2} \right] \\
    &\le 
    \frac{1}{2} \left(\frac{q_n(t)}{ n^{4/3}} \right)^2 \mathbb{E} \left[ |C_i^{(n)}|^2 \right] = \frac{q_n(t)^2}{2 \, n^{4/3}} \mathbb{E} \left[ (X_i^{(n)})^2 \right].
\end{align*}
Notice that $\mathbb{E}[X_i^{(n)}] = \Theta(1)$ and $\mathbb{E}[(X_i^{(n)})^2] = \Theta(1)$. 
This can be seen from \eqref{eq:Xconvergence}, the Cauchy\,–-\,Schwarz inequality, and the finiteness of $\mathbb{E}[\mathcal{X}_i^2]$ for each $i$.
Combining the three estimates above we can now conclude 
\[
    \mathbb{E}\left[ N_i^{(n)} - \hat{N}_i^{(n)} \right] \le E_1^{(n)} + E_2^{(n)} + E_3^{(n)} \xrightarrow[n \to \infty]{} 0, 
\]
which together with \eqref{eq:N_difference_est} yields Corollary \ref{corol:erdos-renyi_AMC}.

\section*{Acknowledgments}

This work of the Interdisciplinary Thematic Institute IRMIA++, as part of the ITI 2021-2028 program of the University of Strasbourg, CNRS and Inserm, was supported by IdEx Unistra (ANR-10-IDEX-0002), and by SFRI-STRAT’US project (ANR-20-SFRI-0012) under the framework of the French Investments for the Future Program.

\providecommand{\bysame}{\leavevmode\hbox to3em{\hrulefill}\thinspace}
\providecommand{\MR}{\relax\ifhmode\unskip\space\fi MR }
\providecommand{\MRhref}[2]{%
	\href{http://www.ams.org/mathscinet-getitem?mr=#1}{#2}
}
\providecommand{\href}[2]{#2}

\end{document}